\date{\today}
\theoremstyle{definition}
\newtheorem{theorem}{Theorem}[section]
\newtheorem{definition}[theorem]{Definition}
\newtheorem{lemma}[theorem]{Lemma}
\newtheorem{example}[theorem]{Example}
\newtheorem{proposition}[theorem]{Proposition}
\newtheorem{conjecture}[theorem]{Conjecture}
\newcommand{\reg}{\text{reg}}
\newcommand{\Tor}{\text{Tor}}
\newcommand{\Stab}{\text{Stab}}
\newcommand{\rees}{\mathcal R}
\newcommand{\kk}{\Bbbk}
\newcommand{\betti}[2]{\beta_{#1,#2}}
\newcommand{\supp}{{\text{{\bf supp}}}}
\newcommand{\ith}{i^{\text{th}}}
\title{Stabilization of Betti Tables}
\author{G. Whieldon}
\address{Gwyneth Whieldon \\
Department of Mathematics\\
Cornell University\\
Ithaca, NY  14850
}
\email{whieldon@math.cornell.edu}
\begin{document}

\subjclass[2000]{13A02,13D02, 13C99}
\keywords{free resolutions, Betti tables, stabilization, powers of ideals, regularity}

\maketitle
\begin{abstract} Let $I\subseteq R=\kk[x_1,...,x_n]$ be a homogeneous equigenerated ideal of degree $r$.  We show here that the shapes of the Betti tables of the ideals $I^d$ stabilize, in the sense that there exists some $D$ such that for all $d\geq D$, $\betti{i}{j+rd}(I^d)\neq 0\Leftrightarrow \betti{i}{j+rD}(I^D)\neq 0$.  We also produce upper bounds for the stabilization index $\Stab(I)$.  This strengthens the result of Cutkosky, Herzog, and Trung that the Castelnuovo-Mumford regularity $\reg(I^d)$ is eventually a linear function in $d$.
\end{abstract}

\section{Background and Results}
\subsection{Asymptotics of Regularity of $I^d$}
For an ideal $I\subseteq R=\kk[x_1,...,x_n]$, much work has been done on showing that the Castelnuovo-Mumford regularity of $I^d$ is a linear function in terms of $d$ for high powers.  The following theorem is a result of Cutkosky, Herzog and Trung:

\begin{theorem}[\bf Theorem 1.1 in \cite{MR1711319}]  Let $I$ be an arbitrary homogeneous ideal.  Let $r(I)$ denote the maximum degree of the homogeneous generators of $I$.  The following hold:
\begin{enumerate}[(i)]
\item There is a number $e$ such that $\reg(I^d)\leq d\cdot r(I)+e$ for all $d\geq 1.$
\item $\reg(I^d)$ is a linear function for all $d$ large enough.
\end{enumerate}
\end{theorem}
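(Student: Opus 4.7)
The plan is to study the Rees algebra $\rees(I)=\bigoplus_{d\geq 0} I^d$ as a bigraded module over an enlarged polynomial ring. Let $f_1,\ldots,f_s$ be a minimal homogeneous generating set of $I$ with $\deg f_j = r_j \leq r(I)$, and form $S=R[y_1,\ldots,y_s]$ with bidegrees $\deg(x_i)=(1,0)$ and $\deg(y_j)=(r_j,1)$. The surjection $y_j\mapsto f_j t$ realizes $\rees(I)$ as a finitely generated bigraded $S$-module whose $d$-th strand in the second grading is $I^d$, viewed as a graded $R$-module.

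For part (i), I would take a minimal bigraded $S$-free resolution
\[0 \longrightarrow F_p \longrightarrow \cdots \longrightarrow F_1 \longrightarrow F_0 \longrightarrow \rees(I) \longrightarrow 0,\]
with $F_i=\bigoplus_k S(-a_{ik},-b_{ik})$; this is finite because $S$ is Noetherian of finite global dimension. Extracting the $d$-th strand in the second grading yields an exact (generally non-minimal) $R$-free resolution of $I^d$ in which each $S(-a,-b)$ contributes the free $R$-module $\bigoplus_{|\alpha|=d-b} R(-a-\sum_j \alpha_j r_j)$. The largest generator degree in homological position $i$ is at most $a_{ik}+(d-b_{ik})\,r(I)$, so
\[\reg(I^d) \;\leq\; d\cdot r(I) + \max_{i,k}\bigl(a_{ik} - b_{ik}\, r(I) - i\bigr),\]
which proves (i) with $e$ equal to this finite maximum.

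For part (ii), the same resolution shows the upper bound on $\reg(I^d)$ is itself the maximum of finitely many affine functions of $d$, one per bigraded Betti number of $\rees(I)$ over $S$; this maximum is piecewise linear with slope $r(I)$ and is eventually given by a single line $d\cdot r(I)+e$. To promote the bound to an equality for $d\gg 0$, I would argue that the extremal $\Tor^S_i(\rees(I),\kk)$ classes of bidegree $(a,b)$ realizing $a-b\,r(I)-i=e$ produce, in each $d$-strand, a top-degree generator that cannot be cancelled: the only possible cancellations would come from strictly smaller values of $a'-b'\,r(I)$ in lower homological positions, and the relevant differentials arise from the bigraded resolution restricted to the $d$-strand. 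Non-cancellation for large $d$ should then follow from a filtration / local cohomology analysis of $\rees(I)$ with respect to $\mm=(x_1,\ldots,x_n)$, using that regularity can equally be read from $H^i_\mm(I^d)$ via $\reg(I^d)=\max_i\{j+i:H^i_\mm(I^d)_j\neq 0\}$.

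The main obstacle is precisely this non-cancellation step in (ii). The upper bound in (i) is essentially bookkeeping inside the bigraded resolution, but pinning $\reg(I^d)$ to an exact linear function requires controlling all cancellations in a family of non-minimal complexes as $d$ varies. The cleanest route I see is either to compute the bigraded local cohomology $H^i_\mm(\rees(I))$ and exploit the fact that it is finitely generated in the second grading as an $S/\mm S$-module, or to reduce modulo a generic linear form and induct on $\dim(R/I)$, both of which require careful preservation of extremal behavior under the reduction — this is the heart of the argument in \cite{MR1711319}.
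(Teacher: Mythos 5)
First, a point of comparison: the paper itself offers no proof of this statement --- it is quoted verbatim as background from Cutkosky--Herzog--Trung \cite{MR1711319} --- so your proposal can only be measured against their argument. Your part (i) is essentially correct and is the standard argument: present $\rees(I)$ as a finitely generated bigraded module over $S=R[y_1,\dots,y_s]$ with $\deg y_j=(r_j,1)$, take a finite bigraded free resolution, note that restricting to the $d$-th strand of the second grading is exact and turns each $S(-a,-b)$ into a free $R$-module generated in degrees at most $a+(d-b)r(I)$, and use the fact that regularity is bounded by generator degrees minus homological index in \emph{any} (not necessarily minimal) graded free resolution. This is the same strand-restriction bookkeeping the paper carries out in Sections 2 and 3 for equigenerated ideals, and it does yield $\reg(I^d)\leq d\cdot r(I)+e$.

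The genuine gap is in (ii), and it is not merely the non-cancellation step you flag: your target statement is wrong. You aim to show $\reg(I^d)=d\cdot r(I)+e$ for $d\gg 0$, but the theorem asserts only that $\reg(I^d)$ is eventually linear, and the eventual slope is in general strictly smaller than $r(I)$. For example, take $I=(x^2,y^2,xyz)\subseteq\kk[x,y,z]$, so $r(I)=3$. Since $x^2y^2z^2\in(x^2,y^2)I$, one has $I^2=(x^2,y^2)I$ and hence $I^d=(x^2,y^2)^{d-1}I$ for all $d\geq 1$; writing $I^d=(x^2,y^2)^d+xyz(x^2,y^2)^{d-1}$ and using the short exact sequence for this sum, one checks $\reg(I^d)\leq 2d+2$, so $\reg(I^d)-3d\to-\infty$ and no argument about uncancellable top-degree classes can force equality with your slope-$r(I)$ envelope. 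The correct eventual slope (identified in \cite{MR1711319} and by Kodiyalam) is the minimal generating degree of a graded reduction of $I$, which is $2<3$ here; it coincides with $r(I)$ for equigenerated ideals, which is why the discrepancy is invisible in the setting of this paper but fatal for the arbitrary homogeneous ideals of the theorem. The tools you name at the end --- bigraded local cohomology of $\rees(I)$, or reduction modulo generic linear forms --- are indeed the right ones, but they must be organized around proving eventual linearity of the maximal degrees of the bigraded modules controlling $\reg(I^d)$ (with slope tied to a reduction of $I$), not around promoting the upper bound of part (i) to an equality; as written, part (ii) of your proposal sets out to prove a false statement.
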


They provide criterion for estimating this $e$ in the case of an equigenerated ideal $I$, i.e. an ideal generated by homogeneous generators of the same degree.  This result generalizes an earlier bound by Swanson giving the existence of $k$ such that
$$\reg(I^d)\leq kd$$
 for ideals generated by monomials in ~\cite{MR1428875}.
 
 Let $I\subseteq R=\kk[x_1,...,x_n]$ be an ideal.  The \emph{graded Betti numbers} of a homogeneous ideal $I$ are given by $\beta_{i,j}(I) = \dim_{\kk} \Tor_i(\kk,I)_j$.  The graded Betti numbers also correspond to the ranks of the free modules in a minimal free resolution of $I$.  We organize this data into the \emph{Betti table of $I$} (in the style of Macaulay 2) displaying $\betti{i}{i+j}(R/I)$ in the $i^{\text{th}}$ column and $j^{\text{th}}$ row, as seen in Example~\ref{ex:stabs}.

We recall the definition of a singly graded equigenerated ideal.
\begin{definition}  We say that an ideal $I=(f_0,f_1,...,f_k)\subset R=\kk[x_1,...,x_N]$ is \emph{equigenerated in degree $r$} if $\deg(f_i)=r$ for all $f_i$.
\end{definition}
Using techniques similar to those in \cite{MR1711319}, \cite{MR1895463}, and \cite{MR2604920}, we produce here a sharper result on the asymptotics of Betti tables of powers $I^d$.
\begin{theorem}[Theorem~\ref{thm:bettistabilization}]  Let $I=(f_0,f_1,...,f_k)\subseteq\kk[x_1,...,x_n]=R$ be an equigenerated ideal of degree $r$.  Then there exists a $D$ such that for all $d>D$, we have
\begin{equation*}
\betti{i}{j+rd}(I^d)\neq 0\Longleftrightarrow \betti{i}{j+rD}(I^D)\neq 0.
\end{equation*}
\end{theorem}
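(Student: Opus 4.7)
My plan is to encode all the data $\betti{i}{m}(I^d)$ as bigraded pieces of a single Tor module over the Rees algebra, let the (standard graded) fiber ring act on it, and then read off stabilization from finite generation together with eventual polynomial Hilbert-function behavior.

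Let $\rees = \bigoplus_{d \ge 0} I^d t^d$ be the Rees algebra, bigraded by $(d, m)$ where $d$ is the Rees power and $m$ is the internal $R$-degree, and set $F := \rees/\mm\rees = \kk[\bar f_0, \ldots, \bar f_k]$. Equigeneration in degree $r$ forces every $\bar f_s$ into Rees-degree $1$, so $F$ is standard graded. The bigraded module
\[
T_i := \Tor_i^R(\kk, \rees) = \bigoplus_{d \ge 0} \Tor_i^R(\kk, I^d)
\]
inherits an $F$-action in which multiplication by $\bar f_s$ shifts bidegree by $(1, r)$. Hence the internal degree $j := m - rd$ is preserved, and $T_i$ decomposes as $\bigoplus_j T_i^{(j)}$, where $T_i^{(j)} = \bigoplus_d (T_i)_{d, rd+j}$ is a graded $F$-module whose $d$-th Hilbert coefficient equals $\betti{i}{rd+j}(I^d)$.

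Next I would establish finite generation. Compute $T_i$ as the $i$-th homology of $K_\bullet(x_1, \ldots, x_n) \otimes_R \rees$, where $K_\bullet$ is the Koszul complex on the $x_i$. Each term is a finite direct sum of bidegree-shifted copies of $\rees$ and so is finitely generated over the Noetherian ring $\rees$; hence the subquotient $T_i$ is finitely generated over $\rees$ and, since $\mm$ annihilates it, over $F$ as well. The Cutkosky--Herzog--Trung bound $\reg(I^d) \le rd + e$ from \cite{MR1711319}, combined with the trivial inequality $j \ge 0$ (no generators sit below $R$-degree $rd$), confines the nonzero slices to the strip $0 \le j \le e$, so only finitely many slices $T_i^{(j)}$ are nonzero, each a finitely generated graded $F$-module.

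Stabilization is then a classical Hilbert-function observation: because $F$ is standard graded, the function $d \mapsto \dim_\kk (T_i^{(j)})_d = \betti{i}{rd+j}(I^d)$ of each slice agrees with a polynomial in $d$ for $d$ large, and a polynomial taking non-negative integer values is either identically zero or eventually positive. So for each pair $(i, j)$ in the finite relevant range, $\betti{i}{rd+j}(I^d)$ is either eventually zero or eventually nonzero in $d$, and taking $D$ to be the maximum stabilization threshold over these finitely many pairs gives the required $D$.

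The real technical work sits in the bigraded setup and finite-generation step: one must verify that the $F$-action on $T_i$ truly preserves the internal-degree grading and that finite generation of $T_i$ descends to each slice. The substantially harder quantitative problem---producing an effective bound on $\Stab(I)$, which the paper also addresses---lies beyond the abstract Noetherian argument above and would require tracking bidegrees through an explicit bigraded minimal resolution of $\rees$ over $R[y_0, \ldots, y_k]$ rather than invoking Hilbert's basis theorem.
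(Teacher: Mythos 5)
Your proposal is correct and follows essentially the same route as the paper: both identify $\betti{i}{j+rd}(I^d)$ with the bigraded pieces of $\Tor_i(\kk,\rees(I))$ (the paper writes this as $\Tor_i^S(S/M,\rees(I))$ over $S=R[w_0,\dots,w_k]$, which is the same module, and its $T=\kk[w_0,\dots,w_k]$-action factors through your fiber cone $F$), slice by the internal degree $j$, observe that only finitely many slices are nonzero and that each is a finitely generated graded module whose Hilbert function is eventually polynomial and hence eventually either identically zero or strictly positive, and take $D$ to be the maximum of the finitely many thresholds. The only differences are cosmetic: the paper obtains finiteness of the relevant $j$-range from its Theorem~\ref{thm:betticalculation} (finitely many bigraded Betti numbers of $\rees(I)$) rather than from the Cutkosky--Herzog--Trung regularity bound you invoke (where, incidentally, the strip should be $0\le j\le e+i$ rather than $0\le j\le e$, which changes nothing since $i\le n$).
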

This gives us that the shape of the Betti tables of powers of an ideal $I$ is eventually fixed, translated down by the degree $r$ of the ideal.
\begin{example}\label{ex:stabs}  Let $I={({x}_{3} {x}_{4} {x}_{5},{x}_{1} {x}_{6} {x}_{7},{x}_{3} {x}_{6} {x}_{8},{x}_{1} {x}_{5} {x}_{9},{x}_{2} {x}_{8} {x}_{9})}\subseteq \kk[x_1,...,x_9].$  We consider the Betti diagrams of the resolutions of the first few powers $I^d$ of $I$.  The diagrams have been shifted to only show nonzero Betti numbers in the resolution of $I^d$.
\begin{center}
\begin{tikzpicture}[scale=.2, betti/.style={
execute at begin cell=\node\bgroup,
execute at end cell=\egroup;,
execute at empty cell=\node{$\cdot$};
}] 
\node at (-12,8) {$I$};
\matrix [betti, ampersand replacement=\&] {
-\&1\&2\&3\&4\&5\\
total: \&5\&10\&9\&3\&\\
2: \&5\&\&\&\&\\
3:\&\&6\&\&\&\\
4:\&\&4\&9\&3\&\\
};
\end{tikzpicture}\;\;\;\;\;\;\;\;\;\;\begin{tikzpicture}[scale=.2, betti/.style={
execute at begin cell=\node\bgroup,
execute at end cell=\egroup;,
execute at empty cell=\node{$\cdot$};
}] 
\node at (-12,8) {$I^2$};
\matrix [betti, ampersand replacement=\&] {
-\&1\&2\&3\&4\&5\\
total: \&15\&41\&39\&12\&\\
5: \&15\&\&\&\&\\
6:\&\&33\&12\&\&\\
7:\&\&8\&27\&12\&\\
};
\end{tikzpicture}\\
\vspace{10pt}
\begin{tikzpicture}[scale=.2, betti/.style={
execute at begin cell=\node\bgroup,
execute at end cell=\egroup;,
execute at empty cell=\node{$\cdot$};
}] 
\node at (-12,8) {$I^3$};
\matrix [betti, ampersand replacement=\&] {
-\&1\&2\&3\&4\&5\\
total: \&35\&117\&121\&39\&1\\
8: \&35\&\&\&\&\\
9:\&\&105\&67\&9\&\\
10:\&\&12\&54\&30\&1\\
};
\end{tikzpicture}\;\;\;\;\;\;\;\;\;\begin{tikzpicture}[scale=.2, betti/.style={
execute at begin cell=\node\bgroup,
execute at end cell=\egroup;,
execute at empty cell=\node{$\cdot$};
}] 
\node at (-12,8) {$I^4$};
\matrix [betti, ampersand replacement=\&] {
-\&1\&2\&3\&4\&5\\
total: \&70\&271\&302\&105\&5\\
11: \&70\&\&\&\&\\
12:\&\&255\&212\&45\&\\
13:\&\&16\&90\&60\&5\\
};
\end{tikzpicture}\\
\vspace{10pt}

\begin{tikzpicture}[scale=.2, betti/.style={
execute at begin cell=\node\bgroup,
execute at end cell=\egroup;,
execute at empty cell=\node{$\cdot$};
}] 
\node at (-12,8) {$I^5$};
\matrix [betti, ampersand replacement=\&] {
-\&1\&2\&3\&4\&5\\
total: \&126\&545\&645\&240\&15\\
14: \&126\&\&\&\&\\
15:\&\&525\&510\&135\&\\
16:\&\&20\&135\&105\&15\\
};
\end{tikzpicture}\;\;\;\;\;\begin{tikzpicture}[scale=.2, betti/.style={
execute at begin cell=\node\bgroup,
execute at end cell=\egroup;,
execute at empty cell=\node{$\cdot$};
}] 
\node at (-12,8) {$I^6$};
\matrix [betti, ampersand replacement=\&] {
-\&1\&2\&3\&4\&5\\
total: \&210\&990\&1229\&483\&35\\
17: \&210\&\&\&\&\\
18:\&\&996\&1040\&315\&\\
19:\&\&24\&189\&168\&35\\
};
\end{tikzpicture}
\end{center}
We can see the stabilized shape of the powers of $I^d$ will be:

\begin{center}
\begin{tikzpicture}[scale=.2, betti/.style={
execute at begin cell=\node\bgroup,
execute at end cell=\egroup;,
execute at empty cell=\node{$\cdot$};
}] 
\node at (-12,8) {$I^d$};
\draw[-,black] (-2.5,1.4)--(-0.2,1.4)--(-.2,-1.2)--(6,-1.2)--(6,-3.6)--(8,-3.6)--(8,-6.1)--(-0.2,-6.1)--(-0.2,-1.2)--(-2.5,-1.2)--cycle;
\matrix [betti, ampersand replacement=\&] {
-\&1\&2\&3\&4\&5\\
total: \&$\ast$\&$\ast$\&$\ast$\&$\ast$\&$\ast$\\
3d-1: \&$\ast$\&\&\&\&\\
3d:\&\&$\ast$\&$\ast$\&$\ast$\&\\
3d+1:\&\&$\ast$\&$\ast$\&$\ast$\&$\ast$\\
};
\end{tikzpicture}
\end{center}
\end{example}
Unfortunately, Theorem~\ref{thm:bettistabilization} does not guarantee that powers of our ideals $I^d$ will  have linear resolutions if the resolution of $I^l$ is linear for some $l$ with $d>l$.  As a counterexample, we have the following example (due to Sturmfels):
\begin{example}[\cite{Sturmfels:2000fk}]\label{ex:lineartonot}  Set $I=(def,cef,cdf,cde,bef,bcd,acf,ade)\subseteq \kk[a,b,c,d,e,f]$.  The ideal $I$ has linear resolution and linear quotients with respect to the ordering given above, but $I^2$ fails to be linear.  We include the Betti tables of $I$ and $I^2$ here.
\begin{center}
\begin{tikzpicture}[scale=.2, betti/.style={
execute at begin cell=\node\bgroup,
execute at end cell=\egroup;,
execute at empty cell=\node{$\cdot$};
}] 
\node at (-10,6) {$I$};
\matrix [betti, ampersand replacement=\&] {
-\&0\&1\&2\&3\\
total: \&1\&8\&11\&4\\\
0: \&1\&\&\&\\
1:\&\&\&\&\\
2:\&\& 8 \& 11 \& 4 \\
};
\node at (0,-15){};
\end{tikzpicture}\;\;\;\;\;\;\;\;\;\;\;\;\;\;\;\;\;\;\begin{tikzpicture}[scale=.2, betti/.style={
execute at begin cell=\node\bgroup,
execute at end cell=\egroup;,
execute at empty cell=\node{$\cdot$};
}] 
\node at (-14,11) {$I^2$};
\matrix [betti, ampersand replacement=\&] {
-\&0\&1\&2\&3\&4\&5\&6\\
total: \&1\&36\&85\&79\&38\&10\&1\\\
0: \&1\&\&\&\&\&\&\\
1:\&\&\&\&\&\&\&\\
2:\&\&\&\&\&\&\&\\
3:\&\&\&\&\&\&\&\\
4:\&\&\&\&\&\&\&\\
5:\&\& 36 \& 84 \& 75 \&32\&6\& \\
6:\&\&\&1\&4\&6\&4\&1\\
};
\end{tikzpicture}
\end{center}
\end{example}
More generally, Conca provided a class of ideals $I_k$ which have linear quotients (and hence, linear resolutions) until the $k^{\text{th}}$ power, then have nonlinear resolutions for all powers higher than $k$ \cite{MR2184787}.  This implies that for an ideal $I$, the shapes of Betti tables of $I, I^2, \ldots, I^d$ and $I^{d+1}$ need not satisfy any chain of inclusions, though they eventually stabilize for some $I^D$.

We also provide an upper bound for the Betti numbers of powers of an equigenerated ideal $I$ in terms of the Betti numbers of the Rees ideal of $I$ as follows.
\begin{theorem}[Theorem~\ref{thm:betticalculation}]  Let $I=(f_0,f_1,...,f_k)\subseteq R=\kk[x_1,...,x_N]$ with $f_i$ homogeneous of degree r.  Let $\rees(I)$ be the Rees algebra of $I$ in ring $S=\kk[x_1,...,x_N,w_0,...,w_k]$ with bigrading $\deg(x_i)=(1,0)$ and $\deg(w_i)=(0,1)$.  Then
$$\betti{i}{j+rd}(I^d)\leq\sum_{m=0}^d\binom{d+k-m}{d-m}\betti{i}{(j,m)}(\rees(I))$$
holds for all $i,j,d$.
\end{theorem}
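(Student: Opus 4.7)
The plan is to extract a resolution of each power $I^d$ from a single bigraded free $S$-resolution of the Rees algebra, then read off rank bounds on its Betti numbers. I start by fixing a minimal bigraded $S$-free resolution
\begin{equation*}
\cdots \longrightarrow F_i \longrightarrow \cdots \longrightarrow F_0 \longrightarrow \rees(I) \longrightarrow 0,\qquad F_i = \bigoplus_{(a,m)} S(-a,-m)^{\betti{i}{(a,m)}(\rees(I))}.
\end{equation*}
Since exactness of a complex of bigraded $S$-modules is detected pointwise in each bidegree, the functor sending a bigraded $S$-module $M$ to its $w$-degree-$d$ strand $M_{(*,d)}$ is exact; applying it yields an exact complex of graded $R$-modules resolving $\rees(I)_{(*,d)}$.

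Next, I identify the constituents of this strand. Writing $S \cong R\otimes_{\kk}\kk[w_0,\ldots,w_k]$, the $w$-degree-$d$ component of $S(-a,-m)$ is $R(-a)\otimes_{\kk}\kk[w_0,\ldots,w_k]_{d-m}$, a free $R$-module of rank $\binom{d-m+k}{k}=\binom{d+k-m}{d-m}$ with generators concentrated in $x$-degree $a$ (and zero whenever $m>d$). Meanwhile the presentation $w_i\mapsto f_i t$ of $\rees(I)$ identifies its bidegree-$(j,d)$ piece with $(I^d)_{j+rd}$, so $\rees(I)_{(*,d)} \cong I^d(rd)$ as graded $R$-modules.

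Combining these two identifications, the $w$-degree-$d$ strand is an $R$-free (in general non-minimal) resolution of $I^d(rd)$ whose $i$-th module is
\begin{equation*}
\bigoplus_{a}\bigoplus_{m=0}^{d} R(-a)^{\binom{d+k-m}{d-m}\,\betti{i}{(a,m)}(\rees(I))}.
\end{equation*}
Because Betti numbers are bounded above by the ranks appearing in any graded free resolution, and because $\betti{i}{a}(I^d(rd)) = \betti{i}{a+rd}(I^d)$, setting $a=j$ delivers the inequality in the statement.

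The argument is largely bookkeeping; the only steps meriting real care are exactness of the strand functor (immediate from bidegree-wise exactness) and tracking of the $x$-degree shift $rd$ that arises because $S$ is given the convention $\deg(w_i)=(0,1)$ rather than $(r,1)$. The mild obstacle is simply keeping those shifts in the right place and confirming that the non-minimality of the induced resolution is exactly what produces an inequality rather than an equality; no deeper input is needed.
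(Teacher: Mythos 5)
Your proposal is correct and follows essentially the same route as the paper: restrict a minimal bigraded free resolution of $\rees(I)$ to the $w$-degree $d$ strand, identify $S(-j,-m)_{(\ast,d)}$ as a free $R$-module of rank $\binom{d+k-m}{d-m}$, and bound $\betti{i}{j+rd}(I^d)$ by the ranks in this (possibly non-minimal) $R$-free resolution of $I^d$. Your explicit justification of the exactness of the strand functor and the identification $\rees(I)_{(\ast,d)}\cong I^d(rd)$ are points the paper treats more briefly in Section~\ref{sec:equigen}, but the argument is the same.
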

The proof follows from a careful examination of the restriction of a minimal resolution of $\rees(I)$ to bidegrees $(\ast, d)$.  We give the smallest $D$ for which this stabilization occurs a name:
\begin{definition}[Definition~\ref{defn:stab}]  Let $I$ be a homogeneous equigenerated ideal in polynomial ring $R$.  Let the \emph{stabilization index $\Stab(I)$ of $I$} be the smallest such $D$ such that for all $d\geq D$,
\begin{equation*}
\betti{i}{j+rd}(I^d)\neq 0\Longleftrightarrow \betti{i}{j+rD}(I^D)\neq 0.
\end{equation*}
\end{definition}

Finding $\Stab(I)$ in directly in terms algebraic properties of $I$ remains open, although a conjecture for edge ideals will be given in Section~\ref{sec:stabi}.  Areas of future research include producing explicit $\Stab(I)$ for other classes of ideals or providing sharper bounds for $\Stab(I)$ than those included here.

\section{Rees Algebras of Equigenerated Ideals}\label{sec:equigen}

\subsection{Rees Algebras and Degree Restrictions}\label{subsec:reesalgebras}
One common technique used in investigating powers $I^n$of an ideal $I$ involves passing to the Rees algebra of $I$.  The Rees algebra $\rees(I)$ of an ideal $I$ is an object which captures the ideal $I$ and all of its powers.
\begin{definition}
Let $I=(f_0,f_1,...,f_k)\subseteq R=\kk[x_1,...,x_N]$.  The \emph{Rees algebra $\rees(I)$ of $I$} is
$$\rees(I)=R\oplus It\oplus I^2t^2\oplus I^3t^3\oplus\cdots\oplus I^nt^n\oplus\cdots$$
This is occasionally denoted $R[It]$.
\end{definition}
In general, we will use a presentation of $\rees(I)$ as a quotient module of the ring $S=R[w_0,w_1,...,w_k]=\kk[x_1,...,x_N,w_0,w_1,...,w_k]$.
\begin{proposition}[~\cite{MR1275840}]\label{prop:reespresentationgeneral}  Let $I=(f_1,...,f_k)\subseteq R=\kk[x_1,...,x_N]$ and let $\rees(I)$ be its Rees algebra.  Then $\rees(I)=R[w_1,...,w_k]/L=\kk[x_1,...,x_N,w_0,w_1,...,w_k]/L$, with presentation ideal
\begin{equation*}
L=(f_i-w_i t : 1\leq i\leq k)S[t]\cap S.
\end{equation*}
If $S=\kk[x_1,...,x_N,w_1,...,w_k]$, and $\rees(I)=S/L$, then $L$ is the \emph{Rees ideal of $I$}.
\end{proposition}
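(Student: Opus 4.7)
The plan is to exhibit a natural surjection $\phi : S \twoheadrightarrow \rees(I)$ and identify its kernel with $L$ via an elimination argument using $t$ as an auxiliary variable.

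First, I would define $\phi : S \to R[t]$ by $x_j \mapsto x_j$ and $w_i \mapsto f_i t$, where $t$ is the grading variable of $\rees(I) \subseteq R[t]$. The image is the $R$-subalgebra generated by $f_1 t, \ldots, f_k t$, which coincides with $\bigoplus_{n \geq 0} I^n t^n = \rees(I)$, so $\phi$ surjects onto $\rees(I)$; it remains to identify $\ker \phi$ with $L$.

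To compute this kernel, I would extend $\phi$ to a homomorphism $\Phi : S[t] \to R[t]$ by additionally setting $\Phi(t) = t$. Under $\Phi$, the generators of $L$ (up to a sign convention, write them as $w_i - f_i t$) map to $f_i t - f_i t = 0$, so the ideal $J = (w_i - f_i t : 1 \leq i \leq k) \subseteq S[t]$ sits inside $\ker \Phi$. Since $\phi = \Phi|_S$, we have $\ker \phi = \ker \Phi \cap S$, so it suffices to prove $\ker \Phi = J$.

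The core step is the reverse inclusion $\ker \Phi \subseteq J$. Given $g \in \ker \Phi$, I would use the congruence $w_i \equiv f_i t \pmod{J}$ to reduce $g$ modulo $J$ to an element $g' \in \kk[x, t] = R[t]$, by successively substituting $f_i t$ for each $w_i$. Since $\Phi$ is the identity on $R[t]$, it follows that $g' = \Phi(g') = \Phi(g) = 0$, whence $g \in J$. The main technical obstacle is making this reduction rigorous and finite; I would handle it by equipping $S[t]$ with a monomial order in which $w_i$ is the leading term of $w_i - f_i t$, so that Buchberger-style reduction strictly decreases total $w$-degree at each step and terminates in a $w$-free normal form. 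Combining the inclusions yields $\ker \phi = J \cap S = L$, so $\rees(I) \cong S/L$ as claimed.
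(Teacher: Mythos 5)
Your proof is correct. The paper gives no proof of this proposition --- it is quoted from the cited reference \cite{MR1275840} --- and your argument (present $\rees(I)$ as the image of the evaluation map $w_i\mapsto f_it$, extend to $\Phi: S[t]\to R[t]$, identify $\ker\Phi$ with $(w_i-f_it)$ via substitution/reduction to a $w$-free normal form, then contract to $S$) is exactly the standard elimination argument found there; the only cosmetic point is that the one-shot substitution $g\mapsto g(x,ft,t)$ already lands in $R[t]$ with $g-g(x,ft,t)\in J$ by telescoping, so the monomial-order machinery you invoke for termination, while valid, is not needed.
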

\subsection{Resolutions and Bigradings of Rees Algebras}
Taking a resolution (with an appropriately chosen bigrading) of $L$ gives resolutions of all powers of $L$, and can be used to bound or explicitly compute Betti numbers $\betti{i}{j}(I^n)$ for all $n$. 

We will assume throughout this paper that $I=(f_0,f_1,...,f_k)$ is an equigenerated ideal of degree $r$ in $R=\kk[x_1,...,x_N]$.  Notationally, we set $\rees(I)=S/L$ with $L$ the Rees ideal of $I$ and $S=\kk[x_1,...,x_N,w_0,w_1,...,w_k]$.\\
\\
We bigrade $\rees(I)$ by $\deg(x_i)=(1,0)$ and $\deg(w_i)=(0,1)$ and take the minimal graded free resolution of $\rees(I)$ with respect to this grading.
$${\mathcal F}:\;\;\;\rees(I)\leftarrow S\leftarrow \bigoplus_{(j,m)}S(-j,-m)^{\betti{1}{(j,m)}}\leftarrow\cdots\leftarrow \bigoplus_{(j,m)}S(-j,-m)^{\betti{p}{(j,m)}}\leftarrow 0.$$
Restricting to the strand $(\ast,d)$, we obtain a (possibly nonminimal) resolution of $I^d$:
\begin{equation}\label{eq:reesstrand}{\mathcal F_d}:\;\;\;I^d\leftarrow S_{(\ast,d)}\leftarrow \bigoplus_{(j,m)}S(-j,-m)^{\betti{1}{(j,m)}}_{(\ast,d)}\leftarrow\cdots\leftarrow \bigoplus_{(j,m)}S(-j,m)_{(\ast,d)}^{\betti{p}{(j,m)}}\leftarrow 0.
\end{equation}
Tensoring this resolution with $\kk$ and taking the homology of the maps computes us $\dim \Tor^R_{i}(\kk,I^d)_{j+rd}=\betti{i}{j+rd}(I^d)$.  This shift in the indices of $\betti{i}{j+rd}(I^d)$ accounts for the shift in grading to agree with that of $R$ while viewing $I^d$ as an $R$ module.\\
\\
Alternately, we could have first tensored with $S/M$ for $M=(x_1,...,x_N)$, taken homology of our maps, then restricted in degrees.  This will give us modules $\Tor^S_i(S/M,\rees(I))_j$, and as these two actions commute, we have that
\begin{align*}
\Tor^S_i(S/M,\rees(I))_{(j,d)}&=\Tor^S_{i}(S/M,I^d)_j\\
&=\Tor^R_{i}(\kk,I^d)_{j+rd}.\\
\end{align*}
Hence we have that all Betti numbers of higher powers can be written in terms of the dimensions of the bigraded modules $\Tor_i^S(S/M,\rees(I))$, given by
$$\betti{i}{j+rd}(I^d)=\dim\Tor_i^S(S/M,\rees(I))_{(j,d)}.$$

\section{Bounds on Betti Numbers of Powers of Ideals}\label{sec:bettibounds}
We resolve the Rees algebra $\rees(I)$ and restrict to fixed $w$-degree strands to produce explicit bounds on the Betti numbers of $I^d$.
\begin{theorem}\label{thm:betticalculation}  Let $I=(f_0,f_1,...,f_k)\subseteq R=\kk[x_1,...,x_N]$ with all $f_i$ homogeneous of degree r.  Let $\rees(I)$ be the Rees algebra of $I$ in ring $S=\kk[x_1,...,x_N,w_0,...,w_k]$ with bigrading $\deg(x_i)=(1,0)$ and $\deg(w_i)=(0,1)$.  Then
$$\betti{i}{j+dr}(I^d)\leq\sum_{m=0}^d\binom{d+k-m}{d-m}\betti{i}{(j,m)}(\rees(I))$$
holds for all $i,j,d$.
\end{theorem}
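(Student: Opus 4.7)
The plan is to apply the identity
$$\betti{i}{j+rd}(I^d)=\dim_\kk\Tor_i^S(S/M,\rees(I))_{(j,d)}$$
established at the end of Section~\ref{subsec:reesalgebras} (with $M=(x_1,\ldots,x_N)$), and then estimate the right-hand side by examining the appropriate bigraded piece of a minimal bigraded free resolution of $\rees(I)$ tensored with $S/M$.

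First I would fix a minimal bigraded free resolution $\mathcal{F}_\bullet\to\rees(I)$ over $S$, whose $i$-th term has the form
$$F_i=\bigoplus_{(j',m)}S(-j',-m)^{\betti{i}{(j',m)}(\rees(I))},$$
and tensor it with $S/M$. Since $\Tor_i^S(S/M,\rees(I))$ is a subquotient of $F_i\otimes_S S/M$, in each bidegree we obtain
$$\dim_\kk \Tor_i^S(S/M,\rees(I))_{(j,d)}\;\leq\;\dim_\kk \bigl(F_i\otimes_S S/M\bigr)_{(j,d)}.$$
It then suffices to compute the right-hand side in bidegree $(j,d)$.

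The key observation is that $S/M\cong\kk[w_0,\ldots,w_k]$ is concentrated in $x$-degree zero. Hence the shifted summand $(S/M)(-j',-m)$ contributes to bidegree $(j,d)$ only when $j'=j$, in which case the contribution is the $w$-degree-$(d-m)$ piece of $\kk[w_0,\ldots,w_k]$ in $k+1$ variables, of dimension $\binom{d+k-m}{d-m}$ for $0\leq m\leq d$ (and zero otherwise, since the bigraded resolution lives in $w$-degrees $\geq 0$). Summing over the decomposition of $F_i$ yields
$$\dim_\kk\bigl(F_i\otimes_S S/M\bigr)_{(j,d)}=\sum_{m=0}^{d}\binom{d+k-m}{d-m}\betti{i}{(j,m)}(\rees(I)),$$
which combined with the subquotient bound delivers the claimed inequality.

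The main thing to be careful with is the bookkeeping of the bigrading under the tensor product: specifically, that restricting to bidegree $(j,d)$ after killing the $x$-variables forces the $x$-shift $j'$ of each free generator of $F_i$ to equal $j$, collapsing the count to monomials of $w$-degree $d-m$ in $k+1$ variables. The homological content is otherwise automatic, since the inequality comes from the standard fact that $\Tor$ sits inside the corresponding term of the tensored resolution; no further cancellation argument is needed.
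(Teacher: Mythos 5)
Your proposal is correct and is essentially the paper's argument: the paper restricts the minimal bigraded resolution of $\rees(I)$ to the $w$-degree-$d$ strand to get a (possibly nonminimal) $R$-free resolution of $I^d$ and bounds $\betti{i}{j+rd}(I^d)$ by the resulting ranks, which is the same count you perform via the subquotient bound on $\Tor_i^S(S/M,\rees(I))_{(j,d)}$. In both versions the heart of the matter is the identical bookkeeping that a summand $S(-j,-m)$ contributes $\binom{d+k-m}{d-m}$ (the number of degree $d-m$ monomials in $w_0,\ldots,w_k$) in the relevant (bi)degree, so your route and the paper's differ only in framing, not in substance.
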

\begin{proof}  We take a minimal free resolution of $\rees(I)$ and consider the degree restricted strand used in Section~\ref{sec:equigen}:
\begin{equation*}{\mathcal F_d}:\;\;\;I^d\leftarrow S_{(\ast,d)}\leftarrow \bigoplus_{(j,m)}S(-j,-m)_{(\ast,d)}^{\betti{1}{(j,m)}}\leftarrow\cdots\leftarrow \bigoplus_{(j,m)}S(-j,-m)_{(\ast,d)}^{\betti{p}{(j,m)}}\leftarrow 0.
\end{equation*}
Let $T=\kk[w_0,w_1,...,w_k]$ be the polynomial ring in the $w_i$-variables.  Then we can rewrite our bigraded pieces $S(-j,-m)=R(-j)\otimes T(-m)$.  Then in a fixed strand $(\ast,d)$, we have:
\begin{equation*}{\mathcal F_d}:\;\;\;I^d\leftarrow R\otimes T_d\leftarrow \bigoplus_{(j,m)}R(-j)\otimes T(-m)_{d}^{\betti{1}{(j,d)}}\leftarrow\cdots\leftarrow \bigoplus_{(j,m)}R(-j)\otimes T(-m)_d^{\betti{p}{(j,d)}}\leftarrow 0.
\end{equation*}
It remains to count the dimension over $R$ of the $\ith$ module
$$F_i=\bigoplus_{(j,m)}R(-j)\otimes T(m)_d^{\betti{i}{(j,m)}(\rees(I))}$$
in a fixed degree $j+rd$ of the resolution.  Finally, the dimension of $T(-m)_d$ is the number of degree $d-m$ monomials in a polynomial ring in $k+1$ variables, or
$$\binom{d+k-m}{k}.$$
So we have that
$$\betti{i}{j+rd}(I^d)\leq\sum_{m=0}^{d}\binom{d+k-m}{k}\betti{i}{(j,m)}(\rees(I)),$$
proving the theorem.
\end{proof}
This immediately shows that the Betti diagram of $I^d$ sits inside an (appropriately degree shifted) table coming from the Betti diagram of the resolution of $\rees(I)$.  This implies that the number of nonzero graded Betti numbers of $I^d$ is bounded independent of the power $d$.  We refine this rough bound in the following section.

\section{Betti Diagrams of Powers of Stanley-Reisner Ideals $I_\Delta$}\label{sec:bettidiagramsofpowers}
We are now ready to prove the main theorem:
\begin{theorem}[\bf Betti Tables of Powers of Equigenerated Ideals]\label{thm:bettistabilization}\textcolor{white}{aa}\\
Let $I=(f_0,f_1,...,f_k)\subseteq\kk[x_1,...,x_N]=R$ be an equigenerated ideal of degree $r$.  Then there exists a $D$ such that for all $d>D$, we have
\begin{equation*}
\betti{i}{j+rd}(I^d)\neq 0\Longleftrightarrow \betti{i}{j+rD}(I^D)\neq 0.
\end{equation*}
\end{theorem}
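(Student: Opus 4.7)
The plan is to recast the question as the stabilization of the support of the Hilbert function of a finitely generated graded module over $T = \kk[w_0,\dots,w_k]$. By the identification at the end of Section~\ref{sec:equigen}, $\betti{i}{j+rd}(I^d) = \dim_{\kk}\bigl(\Tor_i^S(S/M, \rees(I))\bigr)_{(j,d)}$. Set $N_i := \Tor_i^S(S/M, \rees(I))$. Because $M$ annihilates $N_i$, the module $N_i$ is naturally a bigraded $T = S/M$-module; computing $N_i$ via the Koszul complex $K_\bullet(x_1,\dots,x_N) \otimes_S \rees(I)$ realizes it as a subquotient of a finitely generated $S$-module annihilated by $M$, so $N_i$ is finitely generated over $T$.

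Next, for each fixed first-degree $j$ I form the graded sub-$T$-module $N_i^{(j)} := \bigoplus_{d \geq 0}(N_i)_{(j,d)}$. Since $\deg w_l = (0,1)$, multiplication by any $w_l$ preserves the first grading, so $N_i^{(j)}$ is generated over $T$ by the finitely many bihomogeneous generators of $N_i$ whose first bidegree equals $j$. Hence $N_i^{(j)}$ is a finitely generated singly-graded $T$-module, and its Hilbert function $d \mapsto \dim_{\kk}(N_i^{(j)})_d = \betti{i}{j+rd}(I^d)$ eventually agrees with a polynomial $p_{i,j}(d) \in \QQ[d]$. Either $p_{i,j} \equiv 0$, in which case the Betti number vanishes for $d \gg 0$, or $p_{i,j}$ is nonzero and has only finitely many roots, in which case the Betti number is nonzero for $d \gg 0$. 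Either way, the indicator $[\betti{i}{j+rd}(I^d) \neq 0]$ stabilizes in $d$.

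It remains only to check that stabilization must be verified for finitely many pairs $(i,j)$. Theorem~\ref{thm:betticalculation} bounds $\betti{i}{j+rd}(I^d)$ by a nonnegative combination of $\betti{i}{(j,m)}(\rees(I))$, and these are nonzero on only finitely many triples $(i,j,m)$ by Hilbert's syzygy theorem applied to the minimal free resolution of $\rees(I)$ over $S$. Hence only finitely many $(i,j)$ ever support a nonzero Betti number across all powers $I^d$, and taking $D$ to be the maximum of the stabilization thresholds over this finite set completes the argument. The only subtle bookkeeping—and the closest thing to a real obstacle—is ensuring that slicing $N_i$ at a fixed first-degree preserves finite generation over $T$; once this is verified, the theorem reduces cleanly to the classical eventual polynomiality of Hilbert functions over a polynomial ring.
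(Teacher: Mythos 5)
Your proposal is correct and follows essentially the same route as the paper's own proof: the same identification $\betti{i}{j+rd}(I^d)=\dim\Tor_i^S(S/M,\rees(I))_{(j,d)}$, the same slicing of each $\Tor_i$ into fixed-first-degree strands (your $N_i^{(j)}$ is the paper's $M_{ij}$) viewed as finitely generated graded $T$-modules with eventually polynomial Hilbert functions, and the same finiteness argument via Theorem~\ref{thm:betticalculation} and the finite minimal resolution of $\rees(I)$. Your added justification that the strands are finitely generated over $T$ is a welcome bit of extra care, but the argument is otherwise the one in the paper.
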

\begin{proof}[Proof of Theorem~\ref{thm:bettistabilization}]  From the calculation in Section~\ref{sec:equigen}, we have that
$$\betti{i}{j+rd}(I^d)=\dim\Tor_i^S(S/M,\rees(I))_{(j,d)}.$$
The $\Tor_i(S/M,\rees(I))$ are finitely generated bigraded $S$-modules.  We decompose them into bigraded components in the following way.

Let $M_i:=\Tor_i(S/M,\rees(I))$ and $M_{ij}:=(M_i)_{(j,\ast)}$.  The $M_{ij}$ are finitely generated graded $T$-modules, where $T=\kk[w_0,w_1,...,w_k]$ is the polynomial ring in the $w_i$-variables.  So each $M_{ij}$ has a Hilbert polynomial such that
$$P_{ij}(d):=P_{M_{ij}}(d)=\dim (M_i)_{(j,d)}$$
for all $d\geq d_{ij}$, with $d_{ij}$ the regularity of $M_{ij}$ as a $T$-module.  Hence, for all $d\geq d_{ij}$ and $P_{M_{ij}}$ not identically zero, we have $\betti{i}{j+dr}(I^d)=\dim (M_i)_{(j,d)}=P_{M_{i,j}}(d)>0$.

Note that $D=\max_{i,j}\{D_{ij}\}$ will be an upper bound for $\Stab(I)$, providing such a maximum exists.
\begin{lemma}\label{lem:Mir}  There are only finitely many nonzero $M_{ij}$.
\end{lemma}
\begin{proof}[Proof of Lemma~\ref{lem:Mir}]  That only finitely many $M_j$ are nonzero follows from
$$\betti{i}{j+rd}(I^d)=\dim\Tor_i^S(S/M,{\mathcal R}(I))_{(j,d)}.$$
As the projective dimension of all powers $I^d$ is bounded by $N$ the number of variables in our original ring, $\Tor_i^S(S/M,\rees(I))=0$ for all $i>N$.\\
\\
We now consider a fixed $M_i$.  Theorem~\ref{thm:betticalculation} gave a bound on the Betti numbers of $I^d$ depending on the Betti numbers of $\rees(I)$,
$$\betti{i}{j+rd}(I^d)\leq\sum_{m=0}^{d}\binom{d+k-m}{d-m}\betti{i}{(j,m)}(\rees(I)).$$
As for a fixed $i$, the number of nonzero Betti numbers of $\rees(I)$ must be finite, there can be only finitely many $j$ such that $\betti{i}{(j,m)}(\rees(I))\neq 0$.  This implies that for $j$ outside of this set, $\betti{i}{j+rd}(I^d)\leq \sum_{m=0}^d 0$ for all $d$, which implies $\betti{i}{j+rd}(I^d)=0$.  So $M_{ij}=0$ except for a finite number of cases.\\
\\
This completes the proof of the lemma.
\end{proof}

By Lemma~\ref{lem:Mir}, we have that the maximum
$$D={\displaystyle \max_{i,j}\biggl\{D_{ij}\biggr\}}$$
exists.  Hence, we have that
$$\dim\Tor_i(S/M,\rees(I)))_{(\ast,d)}=P_{M_i}(d)$$
is a polynomial function for all $d>D$.  We note that for all $d>D$,
$$\betti{i}{j+dr}(I^d)=\dim (M_i)_{(j,d)}=P_{M_{i,j}}(d)>0$$
if and only if
$$\betti{i}{j+Dr}(I^D)=\dim (M_i)_{(j,D)}=P_{M_{i,j}}(D)>0,$$
completing the proof.
\end{proof}
The techniques used throughout the proof of Theorem~\ref{thm:bettistabilization} were similar to those seen in \cite{MR2604920}, \cite{MR1711319}, and \cite{MR1428875}, but extend their results to a classification of all possible nonzero graded Betti numbers of powers of an equigenerated ideal $I$.  

\section{Stabilization Index of $I$}\label{sec:stabi}
The bound $D$ produced in Theorem~\ref{thm:bettistabilization} is not sharp, and finding the smallest such $D$, which we will call the \emph{stabilization index of $I$} $\Stab(I)$, in terms of combinatorial data of $I$ is a subject of future research.
\begin{definition}\label{defn:stab}  Let $I$ be a homogeneous ideal equigenerated in degree $r$ in polynomial ring $R$.  Let $\Stab(I)$ be the smallest such $D$ such that for all $d\geq D$,
\begin{equation*}
\betti{i}{j+rd}(I^d)\neq 0\Longleftrightarrow \betti{i}{j+rD}(I^D)\neq 0.
\end{equation*}
\end{definition}
While this is unknown in general, we conjecture here a formula for $\Stab(I_G)$ for edge ideal $I_G$.
\begin{conjecture}Let $I_G=(m_0,m_1,...,m_k)\subseteq\kk[x_1,...,x_N]$ be a square-free monomial ideal with $\cup_i\supp(m_i)=\{x_1,...,x_N\}$.  Then
$$\Stab(I_G)=\min\{n:\text{there exists an ${\bf m}\in I_G^n$ such that $x_i^2|{\bf m}$ for all i.}\}$$
\end{conjecture}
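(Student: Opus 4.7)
The plan is to establish both inequalities $\Stab(I_G) \leq D_0$ and $\Stab(I_G) \geq D_0$, where
$$D_0 := \min\bigl\{n : \text{there exists ${\bf m}\in I_G^n$ with $x_i^2 \mid {\bf m}$ for all $i$}\bigr\}.$$
The approach is combinatorial-topological: for any monomial ideal $J$ and multidegree ${\bf a}$, a Hochster-type formula gives $\betti{i}{\bf a}(J) = \dim_\kk \widetilde{H}_{i-1}(K_{\bf a}^J;\kk)$, where $K_{\bf a}^J$ is the Koszul/Taylor simplicial complex on the generators of $J$ whose lcm strictly divides ${\bf a}$. The conjecture thus becomes a statement about when the homotopy types realized by the complexes $K_{\bf a}^{I_G^d}$ first achieve the ``stable'' list as $d$ grows.

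For the upper bound $\Stab(I_G) \leq D_0$, I would fix a witness ${\bf m}^\ast \in I_G^{D_0}$ divisible by $x_i^2$ for every $i$, and construct a transfer on Koszul complexes induced by multiplication by ${\bf m}^\ast$, shifting multidegrees ${\bf a} \mapsto {\bf a}\cdot{\bf m}^\ast$ and powers $d \mapsto d+D_0$. The key claim to verify is that the saturation of every variable in ${\bf m}^\ast$ forces this transfer to preserve non-vanishing of $\widetilde{H}_{i-1}$: no new face can appear in $K_{{\bf a}\cdot{\bf m}^\ast}^{I_G^{d+D_0}}$ that makes a nontrivial cycle of $K_{\bf a}^{I_G^d}$ bound. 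Combined with Theorem~\ref{thm:bettistabilization}, which guarantees eventual agreement of Betti patterns for all large $d$, this would yield the upper bound.

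For the lower bound $\Stab(I_G) \geq D_0$, I would aim to exhibit, for each $d < D_0$, an explicit nonzero Betti number present at $I_G^D$ for some $D \geq D_0$ but absent from $I_G^d$. The rough idea is to locate a multidegree ${\bf a}$ whose Koszul complex $K_{\bf a}^{I_G^D}$ carries a nonbounding cycle whose existence is tied to the full saturation of every variable by the generators of $I_G^D$ dividing ${\bf a}$, and then show that replacing $D$ by $d < D_0$ removes enough faces from the complex to force every such cycle to bound, e.g., via a discrete Morse argument anchored at the under-saturated variable.

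The hard part will be the lower bound, since it requires pinpointing a specific Betti number whose first appearance matches $D_0$ rather than merely bounding the stabilization from above. The upper bound, while still nontrivial, should reduce to a manageable transfer/mapping-cone argument once the right Koszul complex is identified. Additional ingredients likely to enter include polarization (to leverage the well-developed theory for squarefree monomial ideals), the lcm lattice of powers of $I_G$, and the special combinatorics of edge ideals (e.g., walks in $G$, co-chordal subgraphs, Fr\"oberg's theorem).
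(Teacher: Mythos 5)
This statement is left as a conjecture in the paper --- the author explicitly says that a proof that the Betti tables stabilize from the existence of such a generator is unknown --- so there is no proof to compare yours against, and your proposal does not close the gap: it is a strategy whose pivotal steps are exactly the open content. Concretely, the ``key claim'' in your upper bound --- that multiplication by the witness ${\bf m}^\ast$ induces a map of Koszul/lcm complexes $K^{\bf a}(I_G^d)\to K^{{\bf a}\cdot{\bf m}^\ast}(I_G^{d+D_0})$ under which nontrivial cycles never become boundaries --- is unsubstantiated, and it is precisely where the difficulty lives. The paper's own Examples (Sturmfels' ideal and Conca's family) show that nonvanishing of $\betti{i}{j+rd}(I^d)$ does \emph{not} persist under taking powers in general, so the full-saturation hypothesis on ${\bf m}^\ast$ would have to carry the entire argument; you give no mechanism (the map on homology could be zero even when both complexes have homology, since the target complex acquires vertices corresponding to minimal generators of $I_G^{d+D_0}$ that are not of the form $g\cdot{\bf m}^\ast$). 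Moreover, even granting persistence, your logic is incomplete: persistence forward plus Theorem~\ref{thm:bettistabilization} only yields that the shape of the Betti table of $I_G^{D_0}$ is \emph{contained} in the stable shape. To conclude $\Stab(I_G)\leq D_0$ you also need the reverse containment --- that every Betti position occupied in the stable table is already occupied at power $D_0$ --- and nothing in your transfer argument produces that.

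The lower bound $\Stab(I_G)\geq D_0$ is likewise only a heuristic: you do not identify a candidate multidegree, a candidate cycle, or why its nonbounding depends on saturation of every variable, and the proposed discrete Morse argument is not specified. Your overall framing (Hochster/Taylor-type formulas for the non-squarefree powers, the lcm lattice, polarization, edge-ideal combinatorics) is a reasonable toolkit and is consistent with the paper's suggestion that the answer should be visible in the Stanley--Reisner complex of the polarization of $I_G^n$; but as written the proposal restates the conjecture in topological language rather than proving either inequality.
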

This seems to be related to the Stanley-Reisner complexes of polarization of the powers of the edge ideal, but a clear proof that the Betti diagrams stabilize from the existence of such a generator is still unknown.  Finding a formula for $\Stab(I)$ of other monomial ideals $I$ remains open.
\subsection{Areas of Future Research}
We would like to answer the following questions in subsequent work on these stabilization indices:
\begin{enumerate}
\item Do formulas for $\Stab(I)$ exist for squarefree monomial ideals?  Do they relate to the dimensions of the facet complex or the Stanley-Reisner complex?
\item Does $\Stab(I_{\Delta})$ have a topological interpretation in terms of $\Delta_{\text{pol}(I^n)}$, the Stanley-Reisner complex of the polarization of $I^n$?
\item Does there exist a class of ideals for which the $D$ produced in Theorem~\ref{thm:bettistabilization} is the sharp bound, i.e. $D=\Stab(I)$?
\end{enumerate}
Aside from the stabilization index, the shapes of chain of Betti tables leading up to the stabilized Betti table appear fairly interesting.  Generally, the shapes of Betti tables of powers of homogeneous equigenerated ideals seem to be unimodal, in the following sense:
\begin{conjecture}  Let $I\subseteq R$ be an equigenerated homogeneous ideal generated in degree $r$.  Then for each pair of indices $(i,j)$ there exist $1\leq D_1\leq D_2\leq \infty$ such that for all $d$ with $D_1\leq d\leq D_2$,
$$\betti{i}{j+dr}(I^d)\neq 0$$
and for all $d<D_1$ or $D_2<d$,
$$\betti{i}{j+dr}(I^d)=0.$$
\end{conjecture}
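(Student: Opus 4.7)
The plan is to work entirely within the bigraded Rees-module framework developed in Sections~\ref{sec:equigen} and~\ref{sec:bettibounds}. Fix a pair of indices $(i,j)$ and, following the proof of Theorem~\ref{thm:bettistabilization}, set $M_i := \Tor_i^S(S/M, \rees(I))$ and $M_{ij} := (M_i)_{(j,\ast)}$, which is a finitely generated graded module over $T = \kk[w_0,\dots,w_k]$. Under the identification $\betti{i}{j+rd}(I^d) = \dim_\kk (M_{ij})_d$, the conjecture is equivalent to the statement that the support $\{d : (M_{ij})_d \neq 0\}$ is an interval in $\NN \cup \{\infty\}$.

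The first step is to split into two cases based on the Krull dimension of $M_{ij}$ as a $T$-module. If $\dim_T M_{ij} > 0$, then the Hilbert polynomial $P_{M_{ij}}$ is nonzero and strictly positive for all $d$ beyond the regularity bound $d_{ij}$ already produced in the proof of Theorem~\ref{thm:bettistabilization}; in this case one should be able to take $D_2 = \infty$, and the remaining task is to show the Hilbert function does not drop to zero at an intermediate degree before reaching the polynomial regime. If $\dim_T M_{ij} = 0$, the module has finite length, so its support is automatically a finite subset of $\NN$, and one must show that this finite set is an interval.

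For the positive-dimensional case, the natural approach is to establish a weak-Lefschetz-type property: for a generic $\ell \in T_1$, multiplication by $\ell$ on $M_{ij}$ has maximal rank in each degree. Combined with the eventual positivity of $P_{M_{ij}}$, this would propagate any nonzero class upward until one reaches the polynomial regime. The $T$-module action is well behaved for this purpose, since $M_i$ may be computed as the Koszul homology of $\rees(I)$ with respect to $x_1,\dots,x_N$, whose differentials are $T$-linear, so $w_0,\dots,w_k$ act compatibly on $M_{ij}$.

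The main obstacle is the finite-length case. A general finitely generated graded $T$-module need not have interval-supported Hilbert function --- the sum of two cyclic modules concentrated in well-separated degrees is a counterexample --- so any proof must exploit structure specific to Rees algebras of equigenerated ideals. One plausible avenue is to chain the short exact sequences $0 \to I^{d+1} \to I^d \to I^d/I^{d+1} \to 0$ into a coherent family of long exact sequences in $\Tor_\bullet^R(\kk,-)$ and analyze how these couple the values of $\betti{i}{j+rd}(I^d)$ across consecutive $d$; alternatively, one could try to realize $M_{ij}$ as a subquotient of a module with manifestly interval support, for example one obtained from a Cohen--Macaulay quotient of the symmetric algebra of $I$. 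Carrying out either plan in sufficient generality is the heart of the problem, and it seems quite possible that the conjecture as stated will require auxiliary hypotheses on $I$ (such as Cohen--Macaulayness of $\mathrm{gr}_I(R)$, or linearity of the Rees presentation in the $w$-variables) before it becomes provable by these techniques.
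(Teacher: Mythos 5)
Your proposal does not prove the statement, and in fact neither does the paper: this is stated as a conjecture, and the paper explicitly leaves it open, remarking that a proof ``would require a better understanding of the modules $M_{ij}$.'' Your reduction is the right one and matches the paper's framework exactly --- under the identification $\betti{i}{j+rd}(I^d)=\dim_\kk (M_{ij})_d$ with $M_{ij}$ a finitely generated graded module over $T=\kk[w_0,\dots,w_k]$, the conjecture says precisely that the support of $M_{ij}$ is an interval --- but both halves of your case analysis remain unproved. In the positive-dimensional case, the weak-Lefschetz/maximal-rank property you invoke for a generic $\ell\in T_1$ is not automatic for finitely generated graded $T$-modules (maximal rank of multiplication by a generic linear form fails already for quite simple modules, e.g. when generators and socle elements are interleaved in degree), so ``propagating a nonzero class upward'' to the polynomial regime needs an argument specific to these Tor modules of Rees algebras, which you have not supplied. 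In the finite-length case you correctly observe that interval support fails for general modules and that structure particular to $\rees(I)$ must enter, but the avenues you propose (chaining the sequences $0\to I^{d+1}\to I^d\to I^d/I^{d+1}\to 0$ through long exact sequences in Tor, or realizing $M_{ij}$ as a subquotient of a module with manifestly interval support) are only named, not carried out, and you yourself concede the result may require extra hypotheses on $I$.

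So the genuine gap is the entire content of the conjecture: what you have written is an accurate and well-organized assessment of where the difficulty sits --- one that agrees with the paper's own framing in terms of the modules $M_{ij}$ --- but it is a research plan, not a proof, and should be labeled as such rather than spliced in as an argument.
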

Proving this conjecture would require a better understanding of the modules $M_{ij}$ described above.  These $M_{ij}$ seem to carry interesting structure, and investigating the connections between $M_{ij}$ and the geometry of the ideal $I$ and its Rees algebra $\rees(I)$ is another area of future interest.
\medskip

\textbf{Acknowledgements.}
The author would like to thank her advisor, Mike Stillman, for his many helpful conversations and comments throughout the course of this work.  This paper would not have been possible without his many suggestions.

\bibliographystyle{amsalpha}
\bibliography{bibliography}
\end{document}